\tikzset{->-/.style={decoration={
  markings,
  mark=at position .45 with {\arrow{>}}},postaction={decorate}}}
\tikzset{->-/.style={decoration={
  markings,
  mark=at position .45 with {\arrow{>}}},postaction={decorate}}}
\def\cO{\mathcal{O}}
\def\cM{{\mathcal{M}}}
\def\C{\mathbb{C}}
\def\b1{{\bf 1}}
\def\P{\mathbb{P}}
\def\bP{\mathbb{P}}
\def\HH{{\mathcal{H}}}
\def\cE{\mathcal{E}}
\def\cF{\mathcal{F}}
\def\cL{\mathcal{L}}
\newcommand*\colvec[1]{
        \global\colveccount#1
        \begin{pmatrix}
        \colvecnext
}
\def\colvecnext#1{
        #1
        \global\advance\colveccount-1
        \ifnum\colveccount>0
                \\
                \expandafter\colvecnext
        \else
                \end{pmatrix}
        \fi
}
\newtheorem{definition}{Definition}[section]
\newtheorem{theorem}[definition]{Theorem}
\newtheorem{lemma}[definition]{Lemma}
\newcommand{\Pic}{\mathsf{Pic}}
\newcommand{\Bl}{{\mathsf{Bl}}}
\newcommand{\Tev}{{\mathsf{Tev}}}
\newcommand{\rk}{\text{rk}}
\newcommand{\ev}{{\text{ev}}}
\DeclareMathOperator{\cHom}{\mathscr{H}\text{\kern -3pt {\calligra\large om}}\,}
\def\cE{\mathcal{E}}
\def\cF{\mathcal{F}}
\def\cL{\mathcal{L}}
\subjclass[2020]{14H10, 14H60, 14J26, 14N10, 14N35}
\title{Curves on Hirzebruch surfaces and semistability}
\author{Alessio Cela}
\address{ University of Cambridge, Department of pure mathematics and mathematical statistics
\hfill \newline\texttt{}
 \indent Centre for Mathematical Sciences, Wilberforce Road Cambridge, UK} \email{{\tt ac2758@cam.ac.uk}}
\author{Carl Lian}
\address{Tufts University, Department of Mathematics, 177 College Ave
\hfill \newline\texttt{}
 \indent Medford, MA 02155} \email{{\tt Carl.Lian@tufts.edu}}
\date{\today}
\begin{document}

\maketitle

\begin{abstract}
When $a\ge2$, we show that a general pointed curve never interpolates through the expected number of points in the Hirzebruch surface $\HH_a$, with one exception. In the exceptional case, the number of such interpolating maps is determined by the geometric Tevelev degrees of $\P^1$, which have been previously computed.
\end{abstract}

\section{Introduction}

Let $\cM_{g,n}$ be the Deligne-Mumford moduli space of genus $g$, $n$-pointed curves. Let $(C,p_1\ldots,p_n)$ be a general, pointed curve of genus $g$, by which we mean a general point of $\cM_{g,n}$. Let $X$ be a smooth, projective variety. Can $C$ be interpolated through $n$ general points of $X$? If so, then in how many ways?

More precisely, let $\beta\in H_2(X,\mathbb{Z})$ be an effective curve class, and let $\cM_{g,n}(X,\beta)$ be the moduli space of pointed maps to $X$ in class $\beta$. Let
\begin{equation*}
    \tau:\cM_{g,n}(X,\beta)\to \cM_{g,n} \times X^n
\end{equation*}
be the forgetful morphism. We assume that $2g-2+n>0$. One expects that $\tau$ is generically finite whenever
\begin{equation}\label{dim_constraint}
    \beta\cdot K_X^\vee=\dim(X)\cdot (n+g-1).
\end{equation}
We assume \eqref{dim_constraint} throughout.


\begin{definition}\label{def:tev}
    Suppose that every irreducible component $Z\subset \cM_{g,n}(X,\beta)$ dominating $\cM_{g,n} \times X^n$ is generically smooth of the expected dimension. Then, the \emph{geometric Tevelev degree} $\Tev^X_{g,n,\beta}$ is by definition the degree of $\tau$. 
\end{definition}

If no such components $Z$ exist, then $\Tev^X_{g,n,\beta}=0$ vacuously. The name ``geometric Tevelev degree'' was introduced in \cite{cps} after a result of Tevelev \cite{tev}, but in fact, the corresponding \emph{virtual} invariants go back to work on Vafa-Intriligator formulas, e.g. \cite{bdw,st,mo}. See \cite{bp,Cela2023} for more recent work on the virtual invariants. The geometric invariants, which are by definition enumerative counts as opposed to virtual intersection numbers, tend to be more subtle to compute, see \cite{cps,fl,cl1,lian_hyp,cl2,lian_pr} for recent computations and \cite{lian_pand,bllrst} for comparisons of virtual and geometric invariants.

We also say that $\Tev^X_{g,n,\beta}$ is \emph{well-defined} if the hypothesis of Definition \ref{def:tev} holds. Given a dominating component $Z\subset \cM_{g,n}(X,\beta)$ that is generically smooth of the expected dimension, a general curve interpolates through $n$ general points in $X$ via a map corresponding to a point of $Z$. Furthermore, this map admits no first-order deformations fixing $(C,p_1,\ldots,p_n)$ and the given points on $X$. If all dominating components $Z$ are generically smooth of the expected dimension, then the total number of interpolating maps is given by $\Tev^X_{g,n,\beta}$.

A basic obstruction to the existence of dominating components $Z\subset \cM_{g,n}(X,\beta)$ is that, if $f:C\to X$ is a general point of $Z$, then $f^{*}T_X$ must be semistable, see Lemma \ref{lemma: stability}. Heuristically speaking, if a curve on $X$ is more prone to deform in some tangent directions on $X$ than others, then it has insufficient room to interpolate through $n$ general points when maximally constrained. In particular, if $T_X$ is not $\beta$-semistable (see Definition \ref{def: stability}), then $\Tev^X_{g,n,\beta}$, if well-defined, must equal zero. There is a large body of work on (semi)stability of vector bundles and their restrictions to curves, see for instance \cite{gm,sacchiero,ramella,ran,larson,aly,clv,lv,lrt1,lrt2}.

In this note, we determine the geometric Tevelev degrees of Hirzebruch surfaces $\HH_a=\P_{\P^1}(\cO(-a)\oplus\cO)$ with $a\ge 2$. We first observe that, with one exception, the semistability obstruction is already enough to rule out the existence of any dominating components $Z\subset \cM_{g,n}(\HH_a,\beta)$.

\begin{theorem}\label{tev_hirzebruch_zero}
    Let $a\ge 2$ be an integer, and let $\HH_a=\P_{\P^1}(\cO(-a)\oplus\cO)$ be a Hirzebruch surface. Let $C_0\subset\HH_a$ denote the zero-section of $\HH_a$, corresponding to the map $\cO(-a) \to \cO(-a)\oplus \cO$. Suppose that $n\ge2$ (or that $g=0$ and $n\ge 3$), and that \eqref{dim_constraint} holds.
    
    Then, no irreducible component $Z\subset \cM_{g,n}(\HH_a,\beta)$ dominates $\cM_{g,n} \times X^n$ under the map
\begin{equation*}
    \tau:\cM_{g,n}(\HH_a,\beta)\to \cM_{g,n} \times \HH_a^n,
\end{equation*}
unless $a=2$ and $\beta= m [C_0]$ is a multiple of the zero-section $C_0\subset \HH_a$. In particular, outside of this case, we have $\Tev^{\HH_a}_{g,n,\beta}=0$.
\end{theorem}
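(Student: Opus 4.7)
The plan is to apply Lemma \ref{lemma: stability} after exhibiting a destabilizing sub-line bundle of $f^*T_{\HH_a}$ for every $f$ representing $\beta$, outside the exceptional case. The key subsheaf arises from the relative tangent sequence
\[
0\to T_{\HH_a/\P^1}\to T_{\HH_a}\to \pi^*T_{\P^1}\to 0
\]
of the projection $\pi:\HH_a\to\P^1$, which, pulled back along $f:C\to\HH_a$, produces an inclusion $f^*T_{\HH_a/\P^1}\hookrightarrow f^*T_{\HH_a}$ of a line bundle into a rank-two vector bundle. Writing $\beta=\alpha[C_0]+bF$ with $F$ the fiber class, and using the intersection numbers $[C_0]^2=a$, $[C_0]\cdot F=1$, $F^2=0$, a routine computation gives $T_{\HH_a/\P^1}\cong\cO_{\HH_a}(2C_0-aF)$ and $-K_{\HH_a}=2C_0+(2-a)F$, hence
\[
\deg(f^*T_{\HH_a/\P^1})=a\alpha+2b,\qquad \mu(f^*T_{\HH_a})=\tfrac{1}{2}\bigl((a+2)\alpha+2b\bigr).
\]
The subbundle $f^*T_{\HH_a/\P^1}$ destabilizes $f^*T_{\HH_a}$ precisely when $(a-2)\alpha+2b>0$.

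I next verify that, under the hypotheses of the theorem, every effective class $\beta$ satisfies this inequality unless $a=2$ and $\beta=\alpha[C_0]$. Effective classes satisfy $\alpha\ge 0$ and $b\ge -a\alpha$. The crucial subcase is $b<0$: then $\beta\cdot C_{\mathrm{neg}}=b<0$, where $C_{\mathrm{neg}}$ is the unique section of $\HH_a$ with self-intersection $-a$. Since any irreducible curve distinct from $C_{\mathrm{neg}}$ has nonnegative intersection with $C_{\mathrm{neg}}$, this forces $f(C)=C_{\mathrm{neg}}$, and hence $\beta=m[C_{\mathrm{neg}}]$ for some $m\ge 1$. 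The dimension constraint \eqref{dim_constraint} then becomes $m(2-a)=2(n+g-1)$, which is impossible for $a\ge 2$ since the assumptions on $(g,n)$ force $n+g\ge 3$. Therefore $b\ge 0$, and combining with $(a-2)\alpha+2b\le 0$ together with $\alpha\ge 0$: for $a\ge 3$ this forces $\beta=0$; for $a=2$ it forces $b=0$, yielding the exceptional case $\beta=\alpha[C_0]$.

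The main technical input is the effectivity analysis in the case $b<0$; everything else amounts to the single slope computation above. Once the destabilization inequality is verified in every remaining case, Lemma \ref{lemma: stability} rules out any dominating component $Z\subset\cM_{g,n}(\HH_a,\beta)$, yielding $\Tev^{\HH_a}_{g,n,\beta}=0$.
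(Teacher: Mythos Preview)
Your approach is essentially the paper's: exhibit $f^*T_{\HH_a/\P^1}$ as a destabilizing sub-line bundle of $f^*T_{\HH_a}$ and invoke Lemma~\ref{lemma: stability}. Your slope computation and case analysis are correct and agree with the paper's. The paper works instead in the dual basis $(\ell,m)$ to $(c_1(\cO_{\HH_a/\P^1}(1)),c_1(\cO_{\P^1}(1)))$, related to yours by $\ell=a\alpha+b$, $m=\alpha$, and derives the same inequality as a special case of the more general projective-bundle computation Lemma~\ref{lemma: stability range}; your direct computation on $\HH_a$ is equivalent. Your treatment of the $b<0$ case via $\beta\cdot C_{\mathrm{neg}}<0$ and the dimension constraint is also correct and parallels the paper's handling of multiples of $[C_\infty]$.

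There is, however, one genuine missing step. Lemma~\ref{lemma: stability} assumes as a hypothesis that the dominating component $Z$ is \emph{generically smooth of the expected dimension}; it does not, as your last sentence asserts, rule out arbitrary dominating components. The paper fills this gap with Lemma~\ref{lem: tev well-defined}: using the hypothesis $n\ge 2$ (so that the image of $f$ cannot be a fiber or the negative section) together with standard Severi-variety results, every dominating component of $\cM_{g,n}(\HH_a,\beta)$ is automatically generically smooth of the expected dimension, and in particular $\Tev^{\HH_a}_{g,n,\beta}$ is well-defined. Without this input your final implication does not follow. Once you insert this step, your argument is complete and equivalent to the paper's proof.
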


That is, when $a\ge 2$, unless $a=2$ and $\beta=m[C_0]$, a general pointed curve admits no interpolations in class $\beta$ through the maximal expected number of points. This leaves only one interesting case.

\begin{theorem}\label{tev_h2}
    We have
    \begin{equation*}
        \Tev^{\HH_2}_{g,n,m[C_0]}=\Tev^{\bP^1}_{g,n,m}
    \end{equation*}
    for any $g,n,m$ for which $n=2m-g+1\ge3$.
\end{theorem}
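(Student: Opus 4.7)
The plan is to identify an open subset of $\HH_2$ with the total space of $\cO_{\P^1}(2)$, exhibiting maps $f \colon C \to \HH_2$ in class $m[C_0]$ as pairs $(\phi, s)$ with $\phi \colon C \to \P^1$ of degree $m$ and $s \in H^0(C, \phi^* \cO_{\P^1}(2))$. Then the enumeration will reduce to $\Tev^{\P^1}_{g,n,m}$ times a local count of $s$ per $\phi$, which I will show equals $1$.

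First, let $C_\infty \subset \HH_2$ denote the unique negative section, with $[C_\infty]^2 = -2$ and disjoint from $C_0$. There is a canonical isomorphism $\HH_2 \setminus C_\infty \cong \Tot(\cO_{\P^1}(2))$ identifying $C_0$ with the zero section. I would observe that $[C_0] \cdot [C_\infty] = 0$ in $\Pic(\HH_2)$ (the two sections being disjoint), and that $f(C) \subseteq C_\infty$ would force the class of $f$ to be a multiple of $[C_\infty]$, incompatible with $m[C_0]$; together these imply $f^*[C_\infty] = 0$ by effectivity, so $f(C) \cap C_\infty = \emptyset$. Thus $f$ determines a unique pair $(\phi, s)$ as above with $\phi := \pi \circ f$ of degree $[C_0] \cdot F = m$. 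For $q_i \notin C_\infty$ (a generic condition), writing $q_i = (r_i, v_i)$ under the identification, the constraint $f(p_i) = q_i$ translates to $\phi(p_i) = r_i$ and $s(p_i) = v_i \in (\phi^* \cO(2))|_{p_i}$.

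It follows that the number of maps $f$ with $f(p_i) = q_i$ equals
\[
\sum_{\phi} \#\{s \in H^0(C, \phi^* \cO(2)) : s(p_i) = v_i \text{ for all } i\},
\]
summed over $\phi \colon C \to \P^1$ of degree $m$ with $\phi(p_i) = r_i$. The outer count is $\Tev^{\P^1}_{g,n,m}$ by definition. The inner count equals $1$ precisely when the evaluation map
\[
\ev_\phi \colon H^0(C, \phi^* \cO(2)) \to \bigoplus_{i=1}^n (\phi^* \cO(2))|_{p_i}
\]
is an isomorphism; both sides have dimension $n = 2m - g + 1$ whenever $\phi^* \cO(2)$ is non-special.

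Verifying that $\ev_\phi$ is an isomorphism for a generic Tevelev cover $\phi$ will be the main technical step. Non-speciality $h^1(C, \phi^* \cO(2)) = 0$ on generic $\phi$ in each dominating component of $\cM_{g,n}(\P^1, m)$ should follow from well-definedness of $\Tev^{\P^1}_{g,n,m}$: generic smoothness at the expected dimension forces the obstruction $H^1(\phi^* T_{\P^1}) = H^1(\phi^* \cO(2))$ to vanish at a generic point. For injectivity, $\ker \ev_\phi = H^0(\phi^* \cO(2)(-p_1 - \cdots - p_n))$, a line bundle of degree $g - 1$; for generic $(C, p_i)$ it represents a generic element of $\Pic^{g-1}(C)$, lying off the theta divisor and hence non-effective, so $h^0 = 0$. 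A dimension count then yields $\ev_\phi$ iso, the inner count is $1$, and $\Tev^{\HH_2}_{g,n,m[C_0]} = \Tev^{\P^1}_{g,n,m}$. Generic smoothness at the expected dimension for dominating components of $\cM_{g,n}(\HH_2, m[C_0])$ will follow from the vector bundle structure (with fibers $H^0(\phi^* \cO(2))$) over the corresponding dominating components of $\cM_{g,n}(\P^1, m)$. The hardest part will be the isomorphism claim for $\ev_\phi$ along Tevelev covers: non-speciality is inherited from well-definedness on the $\P^1$ side, while injectivity uses genericity of $(C, p_i)$ against the codimension-one theta divisor.
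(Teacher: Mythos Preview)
Your overall strategy matches the paper's: identify maps to $\HH_2$ in class $m[C_0]$ with a degree-$m$ map $\phi:C\to\P^1$ together with a section $s\in H^0(C,\phi^*\cO(2))$, then show that the section is uniquely determined by the point constraints. Your argument for non-speciality of $\phi^*\cO(2)$ via obstruction vanishing is fine; the paper instead invokes the base-point-free pencil trick together with Gieseker--Petri, but either route works.

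The genuine gap is in your injectivity step. You assert that $\phi^*\cO(2)(-p_1-\cdots-p_n)$ is a \emph{generic} element of $\Pic^{g-1}(C)$ for generic $(C,p_i)$, hence lies off the theta divisor. This is not justified and is in fact false whenever $n<g$, which certainly occurs: for $g$ even and $m=(g+2)/2$ one has $\rho(g,1,m)=0$ and $n=3$, so $\cM=\phi^*\cO(1)$ ranges over a finite set, and the map $(p_1,p_2,p_3)\mapsto\cM^{\otimes2}(-p_1-p_2-p_3)$ has image of dimension at most $3$ inside the $g$-dimensional $\Pic^{g-1}(C)$. There is no a priori reason such a low-dimensional locus avoids the theta divisor, and ``genericity of $(C,p_i)$ against a codimension-one condition'' does not help when the source has smaller dimension than the target.

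The paper closes this gap directly. Since $\cM^{\otimes2}$ is non-special, Riemann--Roch gives $h^0(C,\cM^{\otimes2})=2m-g+1=n$, and twisting down by $n$ \emph{general} points of $C$ then drops $h^0$ to zero. One must further check that this vanishing holds for \emph{every} $\phi$ counted by $\Tev^{\P^1}_{g,n,m}$, not merely for a generic pair $(\phi,p_1,\ldots,p_n)$: this follows because the evaluation map $\cM_m(C,\P^1)\times C^n\to(\P^1)^n\times C^n$ is generically finite, so the proper closed bad locus cannot dominate the target. Replacing your theta-divisor sentence with this two-step argument makes the proof go through.
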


The condition $n=2m-g+1$ is \eqref{dim_constraint} on both sides. If $2m-g+1<3$ and $m>0$, then it follows from the Brill-Noether theorem that there are no maps from a general curve to $\P^1$ or $\HH_2$ in the required homology classes. 

The geometric Tevelev degrees of $\HH_2$ are therefore determined by those of $\P^1$, which have been computed in \cite{cps,fl,cl1}. When $a=0$, the geometric Tevelev degrees of $\P^1\times\P^1$ are trivially determined by those of $\P^1$. The most difficult case is $a=1$, in which $\HH_1\cong\Bl_q(\P^2)$; some partial results are given in \cite{cl2}. Genus 0 invariants of $\HH_a$ in the log setting (with imposed tangencies) are computed in \cite{cil}.

\subsection{Conventions}

\begin{itemize}
\item We work over $\C$ for convenience, though Lemma \ref{lemma: stability} holds in arbitrary characteristic.
\item The projective bundle $\P(\cE)$ is the moduli space of \emph{lines} in the fibers of $\cE$.
\item Angle brackets $\langle - \rangle$ denote linear span.
\end{itemize}

\subsection{Acknowledgments}

We thank Roya Beheshti, Gavril Farkas, Eric Larson, Brian Lehmann, Aitor Iribar L\'{o}pez, Rahul Pandharipande, Eric Riedl, Jason Starr, Sho Tanimoto, Richard Thomas, and Isabel Vogt for conversations related to this note, and the anonymous referee for their helpful comments. A.C. is supported by SNF grant P500PT-222363. C.L. has been supported by NSF Postdoctoral Fellowship DMS-2001976 and an AMS-Simons travel grant.

\section{Semistability and interpolation}

\begin{definition}\label{def: stability}
Let $X$ be a smooth projective variety. Let $\mathcal{E}$ be a vector bundle on $X$ and $\beta \in H_2(X)$ be an effective curve class. We say that a non-zero sub-bundle $\mathcal{F} \subsetneq \mathcal{E}$ is \emph{$\beta$-destabilizing} if
\begin{equation*}
        \frac{c_1(\mathcal{F}) \cdot \beta }{\mathrm{rank}(\mathcal{F})} > \frac{c_1(\mathcal{E}) \cdot \beta }{\mathrm{rank}(\mathcal{E})}.
\end{equation*}
We say that $\cE$ is \emph{$\beta$-semistable} if there does not exist a non-zero $\beta$-destabilizing sub-bundle.

If $X$ is a curve, then we take implicitly $\beta=[X]$, in which case one recovers the standard notion of semistability. 
\end{definition}

The following lemma is well-known.

\begin{lemma}\label{lemma: stability}
Let $Z\subset \cM_{g,n}(X,\beta)$ be a component dominating $\cM_{g.n}\times X^n$. Assume that $Z$ is generically smooth of the expected dimension, and that \eqref{dim_constraint} holds. Then, for a general point $[f:C\to X]$ of $Z$, the restricted tangent bundle $f^{*}T_X$ is semistable.
\end{lemma}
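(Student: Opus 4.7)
The plan is to argue by contradiction: suppose that for a general $[f:C\to X]\in Z$, the pullback $f^{*}T_X$ is not semistable, with destabilizing sub-bundle $\cF\subsetneq f^{*}T_X$ of rank $r$. I would produce a nonzero section of $f^{*}T_X(-p_1-\cdots-p_n)$, contradicting the generic \'{e}taleness of $\tau|_Z$ at $[f]$.

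The key arithmetic input is the dimension constraint \eqref{dim_constraint}, which determines the slope of $f^{*}T_X$: since $c_1(T_X)=K_X^{\vee}$, one has $\deg(f^{*}T_X)=\beta\cdot K_X^{\vee}=\dim(X)(n+g-1)$, so $\mu(f^{*}T_X)=n+g-1$. A destabilizing sub-bundle $\cF$ must therefore satisfy $\deg(\cF)>r(n+g-1)$, and Riemann-Roch applied to $\cF(-p_1-\cdots-p_n)$ on $C$ yields
\[ \chi\bigl(\cF(-p_1-\cdots-p_n)\bigr)=\deg(\cF)-rn+r(1-g)>0,\]
so $h^{0}(\cF(-p_1-\cdots-p_n))>0$. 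Composing a nonzero section with the inclusion $\cF\hookrightarrow f^{*}T_X$ produces the desired nonzero element of $H^{0}(f^{*}T_X(-p_1-\cdots-p_n))$.

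To conclude, I would identify $H^{0}(f^{*}T_X(-p_1-\cdots-p_n))$ with the kernel of the tangent map $d\tau$ at $[f]$---i.e., with first-order deformations of $f$ fixing both the pointed domain $(C,p_1,\ldots,p_n)$ and the images $f(p_i)\in X$---via standard stable-map deformation theory. Since $Z$ is generically smooth of the expected dimension, which by \eqref{dim_constraint} coincides with $\dim(\cM_{g,n}\times X^n)$, and dominates the target, $\tau|_Z$ is a dominant morphism between smooth varieties of the same dimension; at a general $[f]\in Z$, the differential $d\tau$ is therefore injective (in characteristic zero), contradicting the existence of the nonzero section produced above. The main step requiring care is this last one: justifying the identification of $\ker(d\tau)$ with $H^{0}(f^{*}T_X(-\sum p_i))$, and the vanishing of this kernel at a general $[f]\in Z$ from the generic smoothness and expected-dimension hypotheses. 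The slope and Riemann-Roch inputs are otherwise routine.
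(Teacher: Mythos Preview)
Your proposal is correct and follows essentially the same approach as the paper. The paper restricts to the fiber $\cM_\beta(C,X)\to X^n$ over a general pointed curve and argues that $H^0(C,\cF)$ is too large to inject into $\bigoplus_i \cF_{p_i}$ under the differential, whereas you twist by $-\sum p_i$ first and produce a nonzero element of $H^0(\cF(-\sum p_i))\subset\ker(d\tau)$; these are equivalent repackagings of the same Riemann--Roch estimate combined with the slope constraint $\mu(f^{*}T_X)=n+g-1$.
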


\begin{proof}
Let $(C,p_1,\ldots,p_n)$ be a general point of $\cM_{g,n}$, and let $\tau_C:\cM_\beta(C,X)\to X^n$ be the pullback of $\tau$ over $[(C,p_1,\ldots,p_n)]\in\cM_{g,n}$. Let $x_i=f(p_i)$ for each $i=1, \ldots, n$. Then, the differential of $\tau_C$ at $[f]$
     \begin{equation*}
        (d\tau_C)_f:H^0(C,f^{*}T_X)\to (T_{x_i}X)^{n}    
    \end{equation*}
    is an isomorphism.
    
     Suppose that there exists a destabilizing subbundle $\cF\subset f^*T_X$. The image subspace $H^0(C,\cF)\subset H^0(C,f^{*}T_X)$ has image of dimension at most $n\cdot\rk(\cF)$, corresponding to the tangent vectors given by the subspaces $\cF_{p_i}\subset T_{x_i}X$. On the other hand, by Riemann-Roch,
     \begin{align*}
        h^0(C,\cF) &\ge \int_C c_1(\cF)+\rk(\cF)(1-g)\\
 &>\frac{\rk(\cF)}{\dim(X)} \int_C c_1(f^*T_X) +\rk(\cF)(1-g)\\
         &=\frac{\rk(\cF)}{\dim(X)} \int_Xc_1(T_X)\cdot\beta +\rk(\cF)(1-g)\\
         &=n\cdot\rk(\cF),
     \end{align*}
     which is a contradiction.
\end{proof}

In particular, if $\mathsf{Tev}^X_{g,n,\beta}$ is well-defined and non-zero, then $f^*T_X$ must be semistable for all $[f]$ enumerated by $\mathsf{Tev}^X_{g,n,\beta}$. The tangent bundle $T_X$ must also be $\beta$-semistable on $X$, or else a destabilizing sub-bundle would also be destabilizing upon pullback by a map $f:C\to X$ enumerated by $\mathsf{Tev}^X_{g,n,\beta}$. For example:
\begin{itemize}
    \item When $X=\bP^r$, the geometric Tevelev degrees $\Tev^{\P^r}_{g,n,d}$ are fully determined in \cite{lian_pr}, and can be checked to be non-zero. Therefore, the corresponding restricted tangent bundles $f^{*}T_{\P^r}$ are semistable. This had previously been obtained by E. Larson: \cite[Corollary 1.3]{larson} implies that, for a general point $f\in \cM_{g}(\P^r,d)$ of the unique (Brill-Noether) component of $\cM_{g}(\P^r,d)$ dominating $\cM_g$, the restricted tangent bundle $f^{*}T_{\P^r}$ is semistable whenever $\rho(g,r,d)=g-(r+1)(g-d+r)\ge0$ and $d$ is divisible by $r$. The condition $\rho(g,r,d)\ge0$ is needed for such a dominating component to exist, and the condition that $d$ is divisible by $r$ is necessary for $\Tev^{\bP^r}_{g,n,d}$ to be well-defined, in order to apply Lemma \ref{lemma: stability}. Note also that, when $g=0$, the restricted tangent bundle $f^{*}T_{\P^r}$ can only be semistable when $d$ is divisible by $r$, for arithmetic reasons.

    \item When $X$ is a hypersurface of degree at most $(r+4)/3$ in $\bP^{r+1}$ and $\beta$ is sufficiently large, then virtual Tevelev degrees of $X$ are enumerative (and thus equal to the geometric degrees) \cite[Theorem 1.12(2)]{bllrst}, and non-zero \cite[Theorem 1.4, Theorem 1.5]{bp}. Therefore, for any general point of a component of $\cM_{g,n}(X,d)$ dominating $\cM_{g,n}\times X^n$, the restricted tangent bundle $f^{*}T_{X}$ is semistable for $f:C\to X$. While tangent bundles of smooth hypersurfaces are known to be semistable \cite[Theorem 2]{pw}, the stronger statement for \emph{restricted} tangent bundles seems not previously to have been known.

    \item Calculations of \cite{cl2} imply similar semistability statements for toric blow-ups of $\bP^r$. For example, the genus 0 counts $\Tev^X_{0,n,\beta}$ for $X$ equal to the blow-up of $\bP^r$ at $r$ points \cite[Theorem 12]{cl2} are manifestly non-zero for $\beta$ in an appropriate range.
\end{itemize}

Lemma \ref{lemma: stability} may also be used in the opposite direction. It was once conjectured (e.g. \cite[Conjecture 0.1]{Kanemitsu}) that if $X$ is a Fano manifold with Picard number $1$, then its tangent bundle is (semi)stable. This has been shown to be false in \cite[Theorem 0.4]{Kanemitsu}. In particular, geometric Tevelev degrees of such Fano manifolds must be zero (when well-defined).

\section{Hirzebruch surfaces}

\begin{lemma}\label{lem: tev well-defined}
    Let $\HH_a=\P_{\P^1}(\cO(-a)\oplus\cO)$ be a Hirzebruch surface, with $a\ge 2$. Suppose that $Z\subset \cM_{g,n}(\HH_a,\beta)$ is an irreducible component dominating the target under the morphism
\begin{equation*}
    \tau:\cM_{g,n}(\HH_a,\beta)\to \cM_{g,n} \times \HH_a^n.
\end{equation*}
Suppose further that $n\ge 2$ (or that $g=0$ and $n\ge 3$). Then, the component $Z$ is generically smooth of the expected dimension.

In particular, if \eqref{dim_constraint} holds, then $\Tev^{\HH_a}_{g,n,\beta}$ is well-defined.
\end{lemma}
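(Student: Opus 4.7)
The plan is to show $H^1(C, f^*T_{\HH_a}) = 0$ at a general $[f : C \to \HH_a] \in Z$; via standard deformation theory for stable maps, this implies that $\cM_{g,n}(\HH_a, \beta)$ is smooth at $[f]$ of the expected dimension, and hence so is the component $Z$. The final statement about well-definedness of $\Tev^{\HH_a}_{g,n,\beta}$ follows immediately from Definition~\ref{def:tev}.

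The central tool is the relative tangent sequence
\[
0 \to T_\pi \to T_{\HH_a} \to \pi^*T_{\P^1} \to 0,
\]
where $\pi : \HH_a \to \P^1$ is the structure map. Pulled back to $C$, this is a short exact sequence of line bundles. Setting $\alpha := \beta \cdot F$, the quotient $f^*\pi^*T_{\P^1}$ has degree $2\alpha$ and the sub $f^*T_\pi$ has degree $\beta \cdot c_1(T_\pi)$. By Serre duality, nonvanishing of $H^1(C, f^*T_{\HH_a})$ is equivalent to $f^*T_{\HH_a}$ admitting a sub-line bundle of degree at least $\deg f^*T_{\HH_a}-(2g-2)$, which under \eqref{dim_constraint} equals $2n$.

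The first ingredient is dominance: the composition $[f]\mapsto [\pi\circ f]$ sends $Z$ into $\cM_{g,n}(\P^1,\alpha)$ with image dominating $\cM_{g,n}\times(\P^1)^n$. A dimension comparison of $\cM_{g,n}(\P^1,\alpha)$ against this target yields $2\alpha \geq n+g-1$, which combined with Riemann-Roch and the surjectivity of $H^0(C, f^*T_{\HH_a})\to\bigoplus_i T_{f(p_i)}\HH_a$ (from which the analogous evaluation for $f^*\pi^*T_{\P^1}$ is seen to be surjective) yields $H^1(C, f^*\pi^*T_{\P^1}) = 0$ for a general $[f]\in Z$. For the full vanishing, I would then carry out a case analysis on a hypothetical destabilizing sub-line bundle $\cF \subset f^*T_{\HH_a}$ of degree $\geq 2n$: either (a) $\cF$ is contained in $f^*T_\pi$, forcing $\deg f^*T_\pi\geq 2n$ and hence by \eqref{dim_constraint} $\alpha\leq g-1$, contradicting the dominance lower bound when $n\geq g$; or (b) $\cF$ maps nontrivially (hence injectively, as a nonzero map between line bundles on a curve) into $f^*\pi^*T_{\P^1}$, forcing $\alpha\geq n$.

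The main obstacle is case (b): an injection $\cF\hookrightarrow f^*\pi^*T_{\P^1}$ lifting to a sub of $f^*T_{\HH_a}$ corresponds to a vanishing of the restricted extension class of the tangent sequence, an element of $H^1(C, f^*(T_\pi\otimes \pi^*T_{\P^1}^{-1}))$. I would handle this by explicitly computing the line bundle $T_\pi\otimes \pi^*T_{\P^1}^{-1}$ on $\HH_a$ from the Euler sequence for $\cE=\cO(-a)\oplus\cO$ and showing that, for a general $[f]\in Z$, the pulled-back extension class does not vanish on any sufficiently large sub $\cF$; here the hypothesis $a\geq 2$ (so that $\HH_a$ is genuinely non-convex) and the generic position of $[f]$ in $Z$ both play a role. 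A complementary approach is to verify case (b) cannot occur by exhibiting a single $[f]\in Z$ with $H^1(C, f^*T_{\HH_a})=0$ and invoking upper semicontinuity.
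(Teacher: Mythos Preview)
Your approach is genuinely different from the paper's, and the core of it is incomplete. The paper does not attempt to show $H^1(C,f^*T_{\HH_a})=0$ by hand. Instead it invokes a standard result on Severi varieties of surfaces (cited as \cite[Proposition~2.4]{cl_bn_surface}): a component $Z$ can fail to be generically smooth of the expected dimension only if \emph{every} $[f]\in Z$ has image of anti-canonical degree $\le 3$. On $\HH_a$ with $a\ge 2$, the only irreducible curves of anti-canonical degree $\le 3$ are fibers of $\pi$ and the infinity-section $C_\infty$; a map whose image is one of these cannot hit two general points of $\HH_a$, so no dominating $Z$ can be of this type. That is the entire proof.

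Your direct cohomological route has a real gap, namely your case (b). You correctly observe that a hypothetical large sub-line-bundle $\cF\subset f^*T_{\HH_a}$ either lands in $f^*T_\pi$ or injects into $f^*\pi^*T_{\P^1}$, but in case (b) you only offer a plan: control the pulled-back extension class, or ``exhibit a single $[f]\in Z$ with $H^1=0$.'' The first is exactly where the difficulty lies (the extension class is $[f]$-dependent and there is no evident mechanism to force its nonvanishing on large sub-line-bundles), and the second is circular: producing such an $[f]$ \emph{is} the content of the lemma. Your suggestion that the hypothesis $a\ge 2$ enters here is also misplaced; the paper's argument shows that $a\ge 2$ plays no special role in this particular lemma beyond pinning down which curves have low anti-canonical degree.

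There are smaller issues as well. The lemma does not assume \eqref{dim_constraint}, yet you use it to get $\deg f^*T_{\HH_a}-(2g-2)=2n$. Your dominance bound is miscalculated: comparing the expected dimension $2\alpha+g-1+n$ of $\cM_{g,n}(\P^1,\alpha)$ to $\dim(\cM_{g,n}\times(\P^1)^n)=3g-3+2n$ gives $2\alpha\ge n+2g-2$, not $n+g-1$; with the correct bound your case (a) does in fact go through for all $n\ge 1$, so this is fixable. Finally, the surjectivity of $H^0(C,f^*T_{\HH_a})\to\bigoplus_i T_{f(p_i)}\HH_a$ is asserted from dominance but not justified; generic smoothness in characteristic zero gives surjectivity of the differential of $\tau$ only on the Zariski tangent space of $Z$, which you have not yet identified with $H^0(C,f^*T_{\HH_a})$ (that identification is itself a form of the smoothness you are trying to prove). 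The cleaner route to $H^1(C,f^*\pi^*T_{\P^1})=0$ is the base-point-free pencil trick (Gieseker--Petri), as used later in the paper.
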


\begin{proof}
    Note that $\beta$ necessarily has strictly positive anti-canonical degree on $\HH_a$. By standard results on Severi varieties of surfaces (see e.g. \cite[Proposition 2.4]{cl_bn_surface}), a component $Z$ can only fail to be generically smooth of the expected dimension if every point of $Z$ parametrizes a map $f$ whose image has anti-canonical degree at most 3. The only possible images of such maps are the fibers of $\pi:\HH_a\to\P^1$ and the infinity-section. However, this is impossible if $f$ is required to interpolate through 2 or more general points of $\HH_a$.
\end{proof}

We prove Theorem \ref{tev_hirzebruch_zero} by analyzing the instability of tangent bundles of Hirzebruch surfaces. We may as well work in a slightly more general setting. Let 
\begin{equation*}
X=\P_{\P^{r-s}}(\cO\oplus\cO(-a_1)\oplus\cdots\oplus\cO(-a_{s}))    
\end{equation*}
be a $\P^{s}$-bundle over $\P^{r-s}$, where $0\le a_1\le\cdots\le a_{s}$ and $a_{s}>0$. Call $a=\sum_{i} a_i$. Let $\beta$ be an effective curve class on $X$. We write
\begin{equation*}
    \beta=\ell((c_1(\cO_{X/\P^{r-s}}(1)))^\vee+m((c_1(\cO_{\P^{r-s}}(1)))^\vee,
\end{equation*}
in the basis of $H_2(X)$ dual to $H^2(X)$. A map $f:C\to X$ in class $\beta$ given by:
\begin{itemize}
    \item a line bundle $\cM$ on $C$ of degree $m$, and a surjection $\cO^{r-s+1}\to \cM$, and
    \item a line bundle $\cL$ on $C$ of degree $\ell$, and a surjection $\cO \oplus \cM^{\otimes a_1} \oplus \cdots \oplus \cM^{\otimes a_s}\to\cL$.
\end{itemize}

\begin{lemma}\label{lemma: stability range}
In the notation above, the sub-bundle $T_{X/\P^{r-s}}\subset T_{X}$ is $\beta$-destabilizing if
\begin{equation*}
    \frac{(s+1)\ell-am}{s}>\frac{(s+1)\ell+(-a+r-s+1)m}{r}.
\end{equation*}

In particular, in all of these cases, if the geometric Tevelev degree $\Tev^{X}_{g,n,\beta}$ is well-defined, then it is zero.
\end{lemma}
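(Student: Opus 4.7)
The plan is a direct Chern-class computation using the relative Euler sequence for the projective bundle $\pi: X \to \P^{r-s}$, followed by a standard transfer of the destabilization to any curve mapping into $X$ in class $\beta$, so that Lemma \ref{lemma: stability} applies.

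First I would compute $c_1(T_{X/\P^{r-s}}) \cdot \beta$ from the relative Euler sequence. With $\cE = \cO \oplus \cO(-a_1) \oplus \cdots \oplus \cO(-a_s)$, one gets
\[
c_1(T_{X/\P^{r-s}}) = (s+1)\, c_1(\cO_{X/\P^{r-s}}(1)) + \pi^* c_1(\cE),
\]
and $c_1(\cE) = -a\, c_1(\cO_{\P^{r-s}}(1))$. Pairing with $\beta$ and dividing by $\rk(T_{X/\P^{r-s}}) = s$ gives the left-hand slope $\frac{(s+1)\ell - am}{s}$. Adding $\pi^* c_1(T_{\P^{r-s}}) = (r-s+1)\, \pi^*c_1(\cO_{\P^{r-s}}(1))$ to compute $c_1(T_X) \cdot \beta$ and dividing by $\rk(T_X) = r$ produces the right-hand slope. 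The displayed inequality is then precisely the destabilization condition of Definition \ref{def: stability}.

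For the vanishing of $\Tev^X_{g,n,\beta}$, I would transfer this to any map $f: C \to X$ of class $\beta$. Since the quotient $T_X/T_{X/\P^{r-s}} \cong \pi^* T_{\P^{r-s}}$ is locally free, the inclusion pulls back to a sub-bundle inclusion $f^* T_{X/\P^{r-s}} \hookrightarrow f^* T_X$ on the curve. The projection formula gives $\deg f^* \cF = c_1(\cF) \cdot \beta$ for any vector bundle $\cF$ on $X$, so the slope inequality is preserved verbatim on $C$. Thus $f^* T_X$ is never semistable, and Lemma \ref{lemma: stability} forces $\Tev^X_{g,n,\beta} = 0$ whenever it is well-defined.

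There is no serious obstacle: the entire argument is bookkeeping of the standard projective-bundle Chern class formulas plus one invocation of Lemma \ref{lemma: stability}. The only non-computational point to check is that $f^* T_{X/\P^{r-s}}$ remains a sub-bundle of $f^* T_X$, which is immediate from local freeness of the quotient.
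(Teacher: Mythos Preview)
Your proposal is correct and follows essentially the same argument as the paper: both compute $c_1(T_{X/\P^{r-s}})\cdot\beta$ and $c_1(T_X)\cdot\beta$ via the relative Euler sequence and the pairings $\beta\cdot c_1(\cO_{X/\P^{r-s}}(1))=\ell$, $\beta\cdot c_1(\cO_{\P^{r-s}}(1))=m$, and then read off the destabilization inequality. Your explicit remark that the pullback $f^*T_{X/\P^{r-s}}\hookrightarrow f^*T_X$ remains a sub-bundle (since the quotient $\pi^*T_{\P^{r-s}}$ is locally free) and hence destabilizes $f^*T_X$ for any $f$ is exactly the mechanism the paper invokes just after Lemma~\ref{lemma: stability} to deduce the vanishing of $\Tev^X_{g,n,\beta}$.
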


\begin{proof}
 We have $\beta\cdot c_1(\cO_{\P^{r-s}}(1))=m$ and $\beta\cdot c_1(\cO_{X/\P^{r-s}}(1))=\ell$. In particular, we have $\beta\cdot T_{\P^{r-s}}=(r-s+1)m$. The relative Euler sequence
\begin{equation*}
    0\to \cO\to [\cO\oplus\cO(-a_1)\oplus\cdots\oplus\cO(-a_{s})]\otimes \cO_{X}(1) \to T_{X/\P^{r-s}} \to 0
\end{equation*}
shows that $\beta\cdot T_{X/\P^{r-s}}=(s+1)\ell-am$ and $\beta\cdot T_{X}=(s+1)\ell+(-a+r-s+1)m$. The conclusion follows.
\end{proof}

\begin{proof}[Proof of Theorem \ref{tev_hirzebruch_zero}]
Taking $s=1$ and $r=2$ in Lemma \ref{lemma: stability range} and $X=\HH_a$, we have that $T_{\HH_a/\P^1}$ is destabilizing whenever $\ell>(1+\frac{a}{2})m$. On the other hand, if $\beta$ is the class of an irreducible curve $f:C\to \HH_a$, then $\ell>am$, unless: 
    \begin{itemize}
        \item $\beta$ is equal to $m$ times the class of the zero-section $C_0\subset \HH_a$, in which case $\ell=am$, or
        \item $\beta$ is equal to $m$ times the class of the infinity-section $C_\infty\subset \HH_a$, in which case $\ell=0$.
    \end{itemize}
    Indeed, if $\ell\le ma$, then in order for $\cO\oplus\cM^{\otimes a}\to \cL$ to be surjective, either $\cM^{\otimes a}\to\cL$ is an isomorphism, or $\cO\to\cL$ is an isomorphism. 

    Therefore, when $a\ge3$ and $\beta$ is the class of an irreducible curve $f:C\to \HH_a$, then $T_{\HH_a}$ is $\beta$-unstable unless $\beta$ is a multiple of $[C_{\infty}]$, and if $a=2$, then $T_{\HH_a}$ is $\beta$-unstable unless $\beta$ is a multiple of $[C_{\infty}]$ or $[C_0]$. However, if $\beta$ is a multiple of $[C_{\infty}]$, then \eqref{dim_constraint} can never be satisfied. Theorem \ref{tev_hirzebruch_zero} now follows from Lemmas \ref{lem: tev well-defined} and \ref{lemma: stability}.
\end{proof}

As another example of Lemma \ref{lemma: stability range}, take $X=\Bl_q(\P^r)$, which is the case $s=1$ and $a=1$. Then, we have that $T_{X/\P^{r-1}}$ is destabilizing whenever 
    \begin{equation*}
     2\ell-m>\frac{2\ell+(r-1)m}{r},
    \end{equation*}
    or equivalently $(2r-1) m<(2r-2)\ell$. The vanishing of $\Tev^X_{g,n,\beta}$ in this range agrees with \cite[Theorem 13]{cl2}.


We proceed now to the proof of Theorem \ref{tev_h2}. Fix a general pointed curve $(C,p_1,\ldots,p_n)$ of genus $g$. We will use the following standard results.

\begin{lemma}\label{lemma: base-point free trick}
Let $\cM$ be a line bundle on $C$, and suppose that $V\subset H^0(C,\cM)$ is a base-point free subspace of dimension 2 (linear series of rank 1). Then, we have $H^1(C,\cM^{\otimes 2})=0$.
\end{lemma}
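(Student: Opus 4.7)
The plan is to translate the desired vanishing via Serre duality and reduce it to an injectivity statement, which is the classical content of the ``base-point-free pencil trick''; the reduction itself is formal, and the only substantive input is the Gieseker--Petri theorem for the general curve $C$.

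First I would invoke Serre duality to rewrite $H^1(C,\cM^{\otimes 2})\cong H^0(C,\omega_C\otimes \cM^{-2})^{\vee}$, so that it suffices to prove $H^0(C,\omega_C\otimes \cM^{-2})=0$. The base-point-free hypothesis on $V$ gives a surjection $V\otimes \cO_C\twoheadrightarrow \cM$, and a determinant computation identifies its kernel with the line bundle $\cM^{-1}$, yielding the Koszul-type sequence
\begin{equation*}
0\to \cM^{-1}\to V\otimes \cO_C\to \cM\to 0.
\end{equation*}
Twisting by $\omega_C\otimes \cM^{-1}$ and passing to global sections identifies $H^0(C,\omega_C\otimes \cM^{-2})$ with the kernel of the multiplication map
\begin{equation*}
\mu_V\colon V\otimes H^0(C,\omega_C\otimes \cM^{-1})\to H^0(C,\omega_C).
\end{equation*}
Thus everything comes down to injectivity of $\mu_V$.

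The main obstacle is precisely this injectivity, and I would handle it by appealing to Gieseker's theorem on the Petri conjecture: since $C$ has been fixed general earlier in the section, the full Petri map $H^0(C,\cM)\otimes H^0(C,\omega_C\otimes \cM^{-1})\to H^0(C,\omega_C)$ is injective for every line bundle $\cM$, and restricting to the subspace $V\subset H^0(C,\cM)$ yields injectivity of $\mu_V$. Hence $H^0(C,\omega_C\otimes \cM^{-2})=0$, as needed.

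It is worth emphasizing that the generality of $C$ is essential in this last step: for a hyperelliptic curve of genus at least $3$ with $\cM=g^1_2$ one has $\omega_C\cong \cM^{\otimes (g-1)}$ and hence $H^0(C,\omega_C\otimes \cM^{-2})\neq 0$, so the conclusion genuinely fails without a hypothesis ruling out such curves. This is why Gieseker--Petri rather than a purely formal manipulation is the heart of the argument.
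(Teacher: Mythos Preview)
Your proof is correct and follows essentially the same approach as the paper's own argument: both identify $H^1(C,\cM^{\otimes 2})^\vee \cong H^0(C,\omega_C\otimes \cM^{-2})$ with the kernel of the multiplication map $V\otimes H^0(C,\omega_C\otimes \cM^{-1})\to H^0(C,\omega_C)$ via the base-point-free pencil trick, and then invoke Gieseker--Petri for the general curve $C$ to conclude injectivity. Your version is slightly more explicit about the Koszul sequence underlying the trick and adds the useful hyperelliptic counterexample, but the logical structure is identical.
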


\begin{proof}
    This is the ``base-point-free pencil trick.'' The kernel of the multiplication map 
    \begin{equation*}
    V \otimes H^0(C, \omega_C \otimes \cM^{-1}) \to H^0(C,\omega_C)
    \end{equation*}
    is $H^0(C, \omega_C \otimes  \cM^{\otimes -2}) \cong H^1(C,\cM^{\otimes 2})^\vee$. By the Gieseker-Petri theorem, this map is injective, so we conclude that $H^1(C,\cM^{\otimes 2})=0$.
\end{proof}

\begin{lemma}\label{lem: h^0 twist is zero}
Let $W^1_m(C)\subset \Pic^m(C)$ be the locus of line bundles $\cM$ with $h^0(C,\cM)\ge 2$, and let $\cM \in W^1_m(C)$ be a general point. Then, we have
    \begin{equation*}
    H^0(C, \cM^{\otimes 2}(-p_1-\ldots -p_{2m-g+1}))=0.
    \end{equation*}
\end{lemma}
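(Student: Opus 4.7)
The line bundle $\cM^{\otimes 2}(-p_1 - \cdots - p_n)$ (with $n = 2m-g+1$) has degree $g-1$, so Riemann-Roch gives Euler characteristic zero. The required vanishing is thus equivalent to $\cM^{\otimes 2}(-\sum p_i)$ being a non-effective line bundle. The plan is to prove this via a dimension count whose central input is the exact value $h^0(C,\cM^{\otimes 2}) = n$.

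For the first step, I would establish that $h^0(C,\cM^{\otimes 2}) = n$ for a general $\cM \in W^1_m(C)$. The pencil $|\cM|$ is then base-point-free: the locus of line bundles in $W^1_m(C)$ with a base point is the image of the map $W^1_{m-1}(C) \times C \to W^1_m(C)$, $(\cM', q) \mapsto \cM'(q)$, whose source has Brill-Noether dimension $\rho(g,1,m-1) + 1 = 2m - g - 3$, strictly less than $\dim W^1_m(C) = 2m - g - 2$. Lemma \ref{lemma: base-point free trick} then gives $H^1(C, \cM^{\otimes 2}) = 0$, so $h^0(C, \cM^{\otimes 2}) = n$ by Riemann-Roch. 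Consequently $|\cM^{\otimes 2}|$ is an $(n-1)$-dimensional linear series of degree $2m$.

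The main step is a dimension count. For a fixed such $\cM$, the set of bad tuples $\{(p_i) \in C^n : \cM^{\otimes 2}(-\sum p_i) \text{ is effective}\}$ equals the image in $C^n$ of the incidence $\{((p_i), D) \in C^n \times |\cM^{\otimes 2}| : p_1 + \cdots + p_n \le D\}$. A general $D \in |\cM^{\otimes 2}|$ is a reduced divisor of degree $2m$ (by base-point-freeness) and hence admits only finitely many sub-divisors of degree $n$, so this incidence has dimension at most $n-1$. Globalizing over $\cM$, the total bad locus $B \subset W^1_m(C) \times C^n$ has dimension at most $\dim W^1_m(C) + (n-1)$ over the dense open subset where $h^1(\cM^{\otimes 2})=0$. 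Projecting $B$ to $C^n$, the generic fiber has dimension at most $\dim W^1_m(C) - 1$ or is empty, hence is a proper subvariety of $W^1_m(C)$, and a general $\cM$ escapes it.

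I expect no serious obstacle; the argument is a sequence of standard dimension counts. The one essential input is Lemma \ref{lemma: base-point free trick}, which pins down $h^0(C,\cM^{\otimes 2}) = n$ exactly: were this number strictly larger, the incidence would have dimension $\ge n$, the bad locus could fill $C^n$, and the conclusion would fail.
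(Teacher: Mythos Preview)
Your proof is correct and follows essentially the same approach as the paper: both use Lemma~\ref{lemma: base-point free trick} to obtain $h^0(\cM^{\otimes 2})=n$ for general $\cM$, and then argue that $n$ general points impose independent conditions on $|\cM^{\otimes 2}|$. The paper phrases the second step more simply---twisting down by general points in succession drops $h^0$ by one each time---whereas you set up an incidence correspondence and count dimensions, but the content is the same.
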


\begin{proof}
     The Brill-Noether theorem implies that the general rank 1 linear series on $C$ is base-point free. Thus, by Lemma \ref{lemma: base-point free trick} and Riemann-Roch, we have $H^0(C,\cM^{\otimes 2})=2m-g+1$ for the general $\cM \in W^1_m(C)$. Then, for general points $p_1,\ldots,p_{2m-g+1}$, twisting $\cM^{\otimes 2}$ down in succession decreases the dimension of the space of global sections. The Lemma follows.
\end{proof}

Fix general points $y_1,\ldots,y_n\in\P^1$. Recall that $\Tev^{\P^1}_{g,n,m}$ enumerates the number of degree $m$ covers $f:C\to \P^1$ sending the $p_i\in C$ to $y_i\in \P^1$. The assumption \eqref{dim_constraint} becomes $n=2m-g+1$; fix $g,n,m$ satisfying these conditions. If $n<3$, there are no degree $m$ maps $f:C\to \P^1$, by the Brill-Noether theorem, so assume that $n\ge3$. Then, the geometric Tevelev degree $\Tev^{\P^1}_{g,n,m}$ is well-defined, and computed in \cite{cps,fl,cl1}; we do not restate the formula here.

\begin{lemma}\label{lemma: H0(M) in Tev P1}
    Let $\cM$ be a line bundle on $C$ underlying a map $f:C\to\P^1$ enumerated by $\Tev^{\P^1}_{g,n,m}$. Then, we have
    \begin{equation*}
        H^0(C,\cM^{\otimes 2}(-p_1-\ldots - p_n))=0.
    \end{equation*}
\end{lemma}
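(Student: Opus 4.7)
The plan is to recycle the differential argument from Lemma \ref{lemma: stability}, applied now with $X=\P^1$. The point is that the target $\cM^{\otimes 2}(-p_1-\ldots-p_n)$ is precisely the kernel of the evaluation map on $f^*T_{\P^1}$, and this evaluation map must be an isomorphism because $[f]$ is an étale point of $\tau$.

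First, I would identify $f^{*}T_{\P^1}$ explicitly. Since $T_{\P^1}\cong\cO_{\P^1}(2)$ and $\cM = f^{*}\cO_{\P^1}(1)$, we have $f^{*}T_{\P^1}\cong \cM^{\otimes 2}$. The pencil $V\subset H^0(C,\cM)$ defining $f$ is base-point free, so Lemma \ref{lemma: base-point free trick} gives $H^1(C,\cM^{\otimes 2})=0$, and Riemann-Roch then yields $h^0(C,\cM^{\otimes 2})=2m-g+1=n$.

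Next, because $[f]$ is enumerated by the (well-defined) geometric Tevelev degree $\Tev^{\P^1}_{g,n,m}$, the morphism $\tau$ is étale at $[f]$. Fixing the general source curve $(C,p_1,\ldots,p_n)$ and restricting to the fiber $\tau_C:\cM_m(C,\P^1)\to (\P^1)^n$ of $\tau$ over $\cM_{g,n}$, the differential
\begin{equation*}
(d\tau_C)_f:H^0(C,f^{*}T_{\P^1})\to \bigoplus_{i=1}^n T_{y_i}\P^1
\end{equation*}
is an isomorphism between vector spaces of dimension $n$. Under the identification $f^{*}T_{\P^1}\cong \cM^{\otimes 2}$ this differential is the evaluation map at $p_1,\ldots,p_n$, whose kernel is exactly $H^0(C,\cM^{\otimes 2}(-p_1-\ldots-p_n))$. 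Hence this space vanishes.

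There is no real obstacle: the only thing one must verify is that $\tau_C$ is smooth at $[f]$ (so that its tangent space is the expected $H^0(C,f^{*}T_{\P^1})$), which follows from the $H^1$ vanishing above, and that $[f]$ lies on a component of $\cM_{g,n}(\P^1,m)$ dominating $\cM_{g,n}\times(\P^1)^n$, which is built into the hypothesis that $[f]$ is enumerated by $\Tev^{\P^1}_{g,n,m}$.
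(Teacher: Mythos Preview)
Your argument is correct, and it takes a genuinely different route from the paper's. The paper first proves the auxiliary Lemma \ref{lem: h^0 twist is zero}, showing that the vanishing $H^0(C,\cM^{\otimes 2}(-p_1-\cdots-p_n))=0$ holds for a \emph{general} $\cM\in W^1_m(C)$, and then runs a dimension count on the evaluation map $\cM_m(C,\P^1)\times C^n\to(\P^1)^n\times C^n$ to conclude that the bad locus cannot dominate, hence is avoided by the maps enumerated by $\Tev^{\P^1}_{g,n,m}$. You instead observe that the desired vanishing is literally the injectivity of the differential $(d\tau_C)_f$, which is forced by the étaleness of $\tau$ at any point it enumerates. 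Your approach is more direct: it bypasses Lemma \ref{lem: h^0 twist is zero} (and the appeal to Brill--Noether theory therein) entirely, using only Lemma \ref{lemma: base-point free trick} for the $H^1$-vanishing and the built-in rigidity of the Tevelev count. The paper's route, on the other hand, isolates a statement (Lemma \ref{lem: h^0 twist is zero}) that is of independent interest and makes transparent \emph{why} the vanishing holds, namely because it holds for the generic line bundle and the enumerated ones are general enough.
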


\begin{proof} 
Let $\cM_{m}(C,\P^1)$ be the moduli space of degree $m$ maps $C\to\P^1$, and consider the evaluation map
\begin{equation*}
    \ev: \cM_m(C,\P^1) \times C^n \to (\P^1)^n \times C^n.
\end{equation*}
As $C$ is general, the space $\cM_{m}(C,\P^1)$ is pure of the expected dimension, and $\ev$ has relative dimension 0. Thus, the locus on $\cM_m(C,\P^1) \times C^n$ where the conclusion of Lemma \ref{lem: h^0 twist is zero} fails cannot dominate the target.
\end{proof}

\begin{proof}[Proof of Theorem \ref{tev_h2}]
A map $u: C \to \mathcal{H}_2$ in class $m [C_0]$ is given by:
\begin{itemize}
    \item a line bundle $\cM$ on $C$ of degree $m\ge0$, and a surjective map $\cO^{\oplus 2} 
    \to \cM$, and
    \item a line bundle $\cL$ on $C$ of degree $2m$, and a surjective map $\cM^{\otimes 2}\oplus\cO\to\cL$.
\end{itemize}
Write $[u_1:u_2]$ for the map $\cO^2\to \cM$ and $[u_0:u_3]$ for the map $\cM^{\otimes 2}\oplus\cO\to\cL$; we regard $u_0\in H^0(C, \cL\otimes\cM^{\otimes -2})$ and $u_3\in H^0(C,\cL)$. 

Two 4-tuples $(u_0,u_1,u_2,u_3)$ yield the same map if and only if they differ by the action of $(\C^*)^2$ given by  $(t_1,t_2) \cdot (u_0,u_1,u_2,u_3)=(t_2u_0,t_1 u_1, t_1u_2,t_1^2t_2u_3)$. For such a map to contribute to $\mathsf{Tev}^{\mathcal{H}_2}_{g,n,m}$ ($n>0$), it must be that $u_0 \neq 0$. Thus, we have $\cL\cong\cM^{\otimes 2}$, and that $u_0$ is a non-zero scalar. We may assume that $u_0=1$. 

Now, for general points $p_1,\ldots,p_n\in C$ and $y_1,\ldots,y_n\in \P^1$, the number of pairs of sections $u_1,u_{2}$ (up to scalar) for which $u'=[u_1:u_2]$ defines a map $g:C\to\P^1$ of degree $m$ with $g(p_i)=y_i$ is precisely $\Tev^{\P^1}_{g,n,m}$. Moreover, the maps $g$ have no infinitesimal deformations, and we have $H^0(C,\cL)=n$ and $H^0(\cL(-p_1-\ldots-p_n))=0$ by Lemmas \ref{lemma: base-point free trick} and \ref{lemma: H0(M) in Tev P1}, respectively.

We now fix the sections $u_1,u_2$ (up to scaling) defining the map $g:C\to\P^1$ as above. Let $\pi: \mathcal{H}_2= \P(\cO \oplus \cO(-2)) \to \P^1$ be the projection. We claim that, for general points $x_1,\ldots,x_n\in \mathcal{H}_2$ with the property that $\pi(x_i)=y_i$, there exists a unique map (with no infinitesimal deformations) $u:C\to \mathcal{H}_2$ in curve class $m [C_0]$ with the property that $u(p_i)=x_i$. To see this, write 
$$
\pi^{-1}(y_i)= \P( \cO|_{y_i} \oplus \cO(-2)|_{y_i}) \ni x_i=[t_i:v_i].
$$
where $t_i \in \C$ and $0 \neq  v_i \in \cO(-2)|_{y_i}$. Then, the condition $u(p_i)=x_i$ amounts to
\begin{equation}\label{eqn: u3 in H2}
    (u_3(p_i))(v_i)=t_i \in \C.
\end{equation}
Note that $u_3(p_i) \in \cL|_{p_i}=\cM^{\otimes 2}|_{p_i}=\cO(2)|_{y_i}$. This is a system of $n=\dim H^0(C,\cL)$ affine-linear equations on $H^0(C,\cL)$. When all of the $t_i$ are equal to zero, then $u_3=0$ is the unique solution to this system of equations, because $H^0(C,\cL(-p_1-\cdots -p_n))=0$. Therefore, the system of equations \eqref{eqn: u3 in H2} has a unique solution for any choice of $t_i$. The maps counted by $\Tev^{\HH_2}_{g,n,\beta}$ and $\Tev^{\P^1}_{g,n,m}$ are therefore in bijection.
\end{proof}

\bibliographystyle{plain} 
\bibliography{bib}

\end{document}